\documentclass[11pt]{amsart}

\usepackage{mathtools,amssymb,amsthm}
\usepackage{hyperref}

\newtheorem{theorem}{Theorem}

\newtheorem{lemma}[theorem]{Lemma}

\theoremstyle{definition}

\theoremstyle{remark}

\title{Explicit Measurable Counterexamples to Finite-Difference Operators}
\author{Hajrudin Fejzi\'c}
\date{}

\begin{document}

\begin{abstract}
Let
\[
D(f,h)=\sum_{k=1}^n a_k f(b_kh)
\]
be a finite-difference operator, where the coefficients may be complex
and the nodes are real. We study the asymptotic implication
\[
D(f,h)=o(h)\quad\Longrightarrow\quad f(h)=o(h)
\]
for Lebesgue measurable functions. In 2017, Ash, Catoiu, and
Cs\"ornyei \cite{ACC} showed that this implication holds for every
measurable function precisely for operators belonging to a very
restricted class of finite differences.

To establish that every operator outside the exceptional class admits a
counterexample, Ash, Catoiu, and Cs\"ornyei analyze an infinite system
of linear equations. Their argument is therefore existential in nature
and does not provide an explicit description of the counterexamples. By
contrast, we give a substantially simpler and elementary construction
based on zeros of finite Laurent polynomials and multiplicative
characters on finitely generated multiplicative subgroups. This provides a direct recipe for
constructing measurable functions \(f\) such that
\[
D(f,h)=0\qquad\text{for every }h\in\mathbb C,
\]
while, in fact,
\[
f(h)\not=o(1)\qquad\text{as }h\to0.
\]

\end{abstract}

\footnotetext{\textbf{2020 Mathematics Subject Classification.}
Primary 39B05; Secondary 26A24, 28A05.

\noindent\textbf{Keywords.}
Finite differences, measurable functions, functional equations,
Laurent polynomials, multiplicative groups.
}
\maketitle

Let \(n\in\mathbb N\), let \(a_1,\ldots,a_n\in\mathbb C\), and let
\(b_1,\ldots,b_n\in\mathbb R\) be distinct. For a function \(f\) defined
on \(\mathbb C\), we consider the finite-difference operator
\[
D(f,h)=\sum_{k=1}^n a_k f(b_kh).
\]
The numbers \(b_k\) are called the nodes, and the non-zero numbers \(a_k\) are the
corresponding coefficients.

This paper concerns the implication
\[
D(f,h)=o(h)\quad\text{as }h\to0
\qquad\Longrightarrow\qquad
f(h)=o(h)\quad\text{as }h\to0
\]
for measurable functions \(f\). In the real-valued setting, Ash, Catoiu,
and Cs\"ornyei \cite{ACC} completely characterized the finite differences
for which this implication holds for every measurable function
\(f\colon\mathbb R\to\mathbb R\). Surprisingly, apart from multiplication
by a nonzero constant and a rescaling of the variable, the only
exceptional difference is
\[
R(f,h)=\lambda f(\beta h)+\lambda f(-\beta h)
       +f(h)-f(-h),
\qquad \lambda\beta\ne0.
\]

The difficult part of their result is the converse. Namely, if a finite
difference is not of this exceptional form, then there exists a measurable
function \(f\) such that
\[
D(f,h)=0\qquad\text{for every }h,
\]
while
\[
f(h)\not=o(h)\qquad\text{as }h\to0.
\]

The proof in \cite{ACC} is existential in nature: the counterexample is
obtained from the solvability of an infinite system of linear equations.
In contrast, our proof gives an explicit construction. Crucially, our framework dramatically simplifies this problem: rather than managing vast infinite systems of equations, the counterexamples can be written down almost immediately by inspection.  They
are supported on countable multiplicative subgroups and are described by
multiplicative characters whose parameters are obtained from zeros of
finite Laurent polynomials. 

To demonstrate the remarkable ease and algorithmic nature of this approach, we show how a concrete counterexample can be effortlessly constructed from scratch using a simple algebraic recipe. Consider the operator
\[
D(f,h)=f(4h)-3f(6h)+f(9h)
      =f(2^2h)-3f(2\cdot3h)+f(3^2h).
\]
The corresponding Laurent polynomial is
\[
P_+(x,y)=x^2-3xy+y^2.
\]

In order to find a nonzero root of $P_+$, we first choose an integer point that does not
belong to any of the hyperplanes determined by the exponent vectors. In
this case, the relevant hyperplanes are
\[
2u=u+v,\qquad 2u=2v,\qquad u+v=2v.
\]
The integer point \((1,2)\) avoids all three of them. Consequently,
\[
P_+(t,t^2)=t^2-3t^3+t^4
          =t^2(t^2-3t+1).
\]
Setting
$
t=\frac{3+\sqrt5}{2},
$
we see that \(t\) is a nonzero root of \(P_+(t,t^2)\).

Now choose the countable multiplicative subgroup
\[
G=
\left\{
\varepsilon 5^\ell 2^p3^q:
\varepsilon\in\{\pm1\},\quad
\ell,p,q\in\mathbb Z
\right\},
\]
generated by \(-1,2,3\), together with the auxiliary prime \(5\).
Since \(2,3,\) and \(5\) are multiplicatively independent, every element
of \(G\) has a unique representation of this form. 
The auxiliary generator \(5\) is used only to provide a sequence
\(5^{-\ell}\to0\); the definition of \(f\) does not depend on the
exponent \(\ell\).

Define
\(f\colon\mathbb C\to\mathbb R\) by setting
\(
f(z)=0\) if \(z\notin G,
\)
and
\(
f\left(\varepsilon 5^\ell2^p3^q\right)
=
t^{p+2q}.
\)
A direct calculation yields
\[
f(4h)=t^2f(h),\qquad
f(6h)=t^3f(h),\qquad
f(9h)=t^4f(h)
\]
for every \(h\in\mathbb C\). Hence
\[
D(f,h)
=
\bigl(t^2-3t^3+t^4\bigr)f(h)=P_+(t,t^2)f(h)=0
\]
for every \(h\in\mathbb C\). Since \(G\) is countable, \(f\) is trivially Lebesgue
measurable. Moreover, because
\[
f(5^{-\ell})=1\qquad(\ell\in\mathbb N),
\]
it follows that \(f\not=o(1)\) as \(h\to0\), completing the counterexample.
\begin{lemma}\label{Laurent}
Let

\[
P(x_1,\dots,x_m)
=
\sum_{k=1}^n
a_k
x_1^{\alpha_{k1}}\cdots x_m^{\alpha_{km}}
\]
be a Laurent polynomial with non-zero coefficients and at least two distinct exponent vectors, 
$$(\alpha_{i1}, \ldots, \alpha_{im}) \ne (\alpha_{j1}, \ldots, \alpha_{jm}).$$

Then there exists
$
A\in (\mathbb{C}\setminus\{0\})^m
$
such that
$
P(A)=0.
$
\end{lemma}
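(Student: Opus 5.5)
The plan is to reduce to one variable by substituting a monomial curve, exactly as in the worked example above, and then apply the fundamental theorem of algebra.

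Implicitly, the exponent vectors $\alpha_k=(\alpha_{k1},\dots,\alpha_{km})\in\mathbb Z^m$ are pairwise distinct, since monomials with equal exponents would be merged. If they were not, merged coefficients could cancel and leave a single monomial, which has no zero on the torus. So I assume the $\alpha_k$ are distinct and $n\ge 2$.

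For each pair $i<j$ the set $\{u\in\mathbb Z^m:\langle \alpha_i-\alpha_j,u\rangle=0\}$ is the set of integer points of a proper hyperplane, because $\alpha_i-\alpha_j\neq 0$. A finite union of proper hyperplanes cannot contain all of $\mathbb Z^m$. Concretely, take $u=(1,N,N^2,\dots,N^{m-1})$ with $N$ larger than twice every $|\alpha_{ki}-\alpha_{li}|$. Then $\langle \alpha_i-\alpha_j,u\rangle$ is a nonzero base-$N$ expansion with balanced digits, hence nonzero. Fix such a $u$ and set $e_k=\langle \alpha_k,u\rangle\in\mathbb Z$. These integers are pairwise distinct.

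Substituting $x_i=t^{u_i}$ gives
\[
Q(t)=P(t^{u_1},\dots,t^{u_m})=\sum_{k=1}^n a_k t^{e_k},
\]
a one-variable Laurent polynomial with distinct exponents and nonzero coefficients, so it has at least two terms. Let $e_{\min}$ and $e_{\max}$ be the smallest and largest exponents, with $e_{\min}<e_{\max}$. Then
\[
R(t)=t^{-e_{\min}}Q(t)
\]
is an ordinary polynomial with nonzero constant term and degree $e_{\max}-e_{\min}\ge1$. By the fundamental theorem of algebra $R$ has a root $t_0$, and $t_0\neq0$ because $R(0)\neq0$. Hence $Q(t_0)=0$.

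Setting $A=(t_0^{u_1},\dots,t_0^{u_m})$ gives $A\in(\mathbb C\setminus\{0\})^m$, since negative powers of $t_0\ne0$ are defined, and $P(A)=Q(t_0)=0$.

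The only step needing care is choosing $u$ off all the hyperplanes $\langle\alpha_i-\alpha_j,u\rangle=0$. The base-$N$ choice handles it explicitly, or one can note that a nonzero polynomial $\prod_{i<j}\langle\alpha_i-\alpha_j,u\rangle$ cannot vanish on all of $\mathbb Z^m$.
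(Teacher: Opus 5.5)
Your proposal is correct and is essentially the paper's proof: you choose an integer point off the hyperplanes $\langle\alpha_i-\alpha_j,u\rangle=0$, substitute $x_i=t^{u_i}$, multiply by a power of $t$, and take a nonzero root of the resulting polynomial, which is not a monomial. Two small refinements beyond the paper: your explicit choice $u=(1,N,\dots,N^{m-1})$, and your observation that the exponent vectors must be assumed pairwise distinct, since otherwise merged coefficients can cancel down to a single monomial (e.g.\ $x+y-y$) — a point the paper's ``combining like terms'' step glosses over.
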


\begin{proof}

By combining like terms, we may assume that all of the exponent vectors of $P$ are distinct.
For each pair of exponent vectors,
$$(\alpha_{i1}, \ldots, \alpha_{im}) \ne (\alpha_{j1}, \ldots, \alpha_{jm}),$$ 
consider the hyperplane in $\mathbb{R} ^m$ defined by 
$$
\alpha_{i1}x_1+ \cdots + \alpha_{im}x_m = \alpha_{j1}x_1+ \cdots + \alpha_{jm}x_m.
$$
Since a finite union of hyperplanes cannot cover $\mathbb{R}^m$, nor can it cover the lattice $\mathbb{Z}^m$, there exists an integer lattice  point $r= (r_1, \ldots, r_m)\in \mathbb{Z}^m$ that does not lie on any of the hyperplanes.

Define
\[
Q(t)=P(t^{r_1},\dots,t^{r_m}).
\]
Then \(Q\) is a nonconstant Laurent polynomial in one variable. 
By choice of $r$ all of the exponents of $Q$ are distinct.
Multiplying by a suitable power of \(t\), we obtain a nonconstant ordinary polynomial
\[
R(t)=t^NQ(t).
\]
Since \(R\) has at least two terms, it is not a monomial. Hence, there exists
$
a\in\mathbb{C}\setminus\{0\}
$
such that
$
R(a)=0.
$
Therefore
$
Q(a)=0,
$
and consequently
\[
P(a^{r_1},\dots,a^{r_m})=0.
\]
Since \(a\neq0\), all coordinates \(a^{r_i}\) are nonzero.
\end{proof}

\begin{theorem}
Let
\[
D(f,h)=\sum_{k=1}^n a_k f(b_kh)
\]
be a finite difference with nonzero coefficients \(a_k\in\mathbb C\) and
distinct nodes \(b_k\in\mathbb R\). Suppose that there are at least two
nonzero nodes and that \(D\) cannot be scaled to be of the form
\[
R(f,h)=af(bh)+af(-bh)+f(h)-f(-h).
\]
Then there exists a Lebesgue measurable function
$
f\colon\mathbb C\longrightarrow\mathbb C
$
such that
$
f(h)\neq o(1)$ as $h\to0
$
and
\[
D(f,h)=0\qquad\text{for every }h\in\mathbb C.
\]
If the coefficients of $D$ are real, then either the real part or
the imaginary part of $f$ provides a
real-valued function with the same properties.
\end{theorem}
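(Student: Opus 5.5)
The plan is to generalize the worked example above, with Lemma~\ref{Laurent} supplying the parameters of a multiplicative character. First, the zero node, if present, is harmless. I will construct \(f\) supported on a countable multiplicative subgroup \(G\subset\mathbb R\setminus\{0\}\), so \(f(0)=0\) and the term \(a_kf(0)\) vanishes. Let \(b_1,\dots,b_s\) be the nonzero nodes. The multiplicative group generated by \(|b_1|,\dots,|b_s|\) is a finitely generated, torsion-free subgroup of \((0,\infty)\), hence free abelian. Fix a basis \(g_1,\dots,g_m\) and write \(|b_k|=g_1^{\alpha_{k1}}\cdots g_m^{\alpha_{km}}\) and \(b_k=\varepsilon_k|b_k|\) with \(\varepsilon_k\in\{\pm1\}\). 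Since the group generated by the \(g_i\) is countable, I can choose a real \(c>1\) that is multiplicatively independent of \(g_1,\dots,g_m\), for instance any \(c\) avoiding the countable set of elements of its divisible hull. Put \(G=\{\varepsilon c^\ell g_1^{p_1}\cdots g_m^{p_m}\}\), where each element has a unique representation.

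For a sign character \(\chi\) on \(\{\pm1\}\) (trivial or \(\chi(-1)=-1\)) and \(A\in(\mathbb C\setminus\{0\})^m\), define \(f(\varepsilon c^\ell g^p)=\chi(\varepsilon)A^p\) on \(G\) and \(f=0\) off \(G\). Since \(G\) is closed under multiplication by each \(b_k\) and by its inverse, \(h\in G\) if and only if \(b_kh\in G\). Exactly as in the example, this gives \(f(b_kh)=\chi(\varepsilon_k)A^{\alpha_k}f(h)\) for all \(h\in\mathbb C\), and hence
\[
D(f,h)=P_\chi(A)\,f(h),\qquad P_\chi(x)=\sum_{b_k\neq0}a_k\chi(\varepsilon_k)x^{\alpha_k}.
\]
Since \(f(c^{-\ell})=1\) for all \(\ell\in\mathbb N\), \(f\neq o(1)\), and \(f\) is measurable because \(G\) is countable. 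So it suffices to find \(\chi\) such that \(P_\chi\), after combining like terms, has at least two distinct exponent vectors with nonzero coefficients. Lemma~\ref{Laurent} then gives the required \(A\).

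The main step is the combinatorial case analysis of the like terms. Two nodes share an exponent vector exactly when they are \(\pm\beta\). A singleton class \(\{\beta\}\) keeps a nonzero coefficient for both choices of \(\chi\). A pair \(\{\beta,-\beta\}\) with coefficients \(a_+,a_-\) has combined coefficient \(a_+ +\chi(-1)a_-\), which is nonzero for at least one \(\chi\). It vanishes for the trivial \(\chi\) iff \(a_-=-a_+\), and for the sign character iff \(a_-=a_+\).
\begin{itemize}
\item If there are at least three classes of absolute values, the two characters together have at least three surviving classes, so one character has at least two.
\item With exactly two classes, both characters fail only if one class is a pair \(\lambda f(\beta h)+\lambda f(-\beta h)\) that dies under the sign character, and the other is a pair \(\mu f(\gamma h)-\mu f(-\gamma h)\) that dies under the trivial character. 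After scaling, this is precisely \(R\), which is excluded.
\end{itemize}

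The remaining case is the one I expect to be the obstacle: a single class \(\{\beta,-\beta\}\), with \(D=a_+f(\beta h)+a_-f(-\beta h)\) plus possibly a zero-node term. If \(a_-=\pm a_+\), take \(\chi\) with \(a_++\chi(-1)a_-=0\). Then \(P_\chi\equiv0\) and any \(A\) works; this is the function of the example with the sign twist. If \(a_-\neq\pm a_+\), the relation \(D(f,h)=0\) forces \(f(\beta h)=(a_-/a_+)^2f(\beta h)\), so \(f=0\) off the origin, and no counterexample exists. I would therefore check whether this degenerate operator is meant to be absorbed into the hypothesis, for example via the convention on the exceptional form. Otherwise this subcase must be excluded explicitly. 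Finally, for real coefficients, \(D(\operatorname{Re}f,h)=\operatorname{Re}D(f,h)=0\) and likewise for \(\operatorname{Im}f\). Since \(f(c^{-\ell})=1\), the real part already fails to be \(o(1)\).
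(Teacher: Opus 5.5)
\textbf{The gap is the zero node.} You say a zero node is harmless, but it is not. Because your $f$ is supported on $G$, you force $f(0)=0$, so everything then rests on the nonzero nodes alone. An operator with a zero node is never a scaling of $R$, since a scaled $R$ has only nonzero nodes. So the hypothesis does not exclude, for instance,
\[
D_1(f,h)=f(0)+f(2h)+f(-2h)+f(h)-f(-h),\qquad D_2(f,h)=f(0)+2f(2h)+f(-2h).
\]
For both operators, each of your two characters leaves a single monomial with nonzero coefficient. Hence $P_\chi(A)\neq0$ for every $A$, and your $f$ vanishes identically. Your argument goes wrong in two places here. Your two-class step misreads $D_1$ as ``precisely $R$'', forgetting the extra node. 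In the one-class step, the deduction $f(\beta h)=(a_-/a_+)^2f(\beta h)$ silently uses $a_0f(0)=0$. That is an artifact of your construction, not a consequence of $D(f,h)=0$ for $h\neq0$.

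\textbf{The missing idea,} which the paper uses before anything else, is to use the freedom at the origin. If $b_1=0$, take $f\equiv1$ on $\mathbb C\setminus\{0\}$ and $f(0)=-a_1^{-1}\sum_{k\ge2}a_k$. Then $D(f,h)=a_1f(0)+\sum_{k\ge2}a_k=0$ for every $h\neq0$, whatever the other nodes are, and $f\neq o(1)$.

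This choice only gives the identity for $h\neq0$. At $h=0$ itself, $D(f,0)=f(0)\sum_ka_k$. For $D_2$ this forces $f(0)=0$, and then $f\equiv0$ by your own computation. So the identity can only be required for $h\neq0$, which is all the $o(h)$ question involves; the paper's ``for every $h\in\mathbb C$'' should be read that way.

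Once zero nodes are disposed of this way, your argument is the paper's. Your trivial and sign characters are its $P_+$ and $P_-$, and your pigeonhole count is a cleaner version of its monomial case analysis.

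Your remaining worry also resolves. With no zero node, $a_+f(\beta h)+a_-f(-\beta h)$ with $a_-\neq\pm a_+$ is a scaling of $R$ with $b=1$. Indeed, $a_+f(h)+a_-f(-h)=c\,[af(h)+af(-h)+f(h)-f(-h)]$ for $c=(a_+-a_-)/2$ and $a=(a_++a_-)/(a_+-a_-)\neq0$. The paper implicitly absorbs this case the same way, in its case where both $P_\pm$ are monomials and the two pairs coincide.
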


\begin{proof}
If one of the nodes is zero, say \(b_1=0\), define
\[
f(z)=
\begin{cases}
-\dfrac{1}{a_1}\displaystyle\sum_{k=2}^n a_k,
   & z=0,\\[6pt]
1, & z\neq0.
\end{cases}
\]
Then \(f(h)\neq o(1)\) as \(h\to0\), and
\[
D(f,h)=a_1f(0)+\sum_{k=2}^n a_k=0
\]
for every \(h\in\mathbb C\). We may therefore assume that none of the nodes
\(b_k\) is zero.

Let \(H\) be the multiplicative subgroup of
\(\mathbb R\setminus\{0\}\) generated by
\[
\{-1,b_1,\dots,b_n\}.
\]
Since \(H\) is finitely generated, there exist positive real numbers
\(q_1,\dots,q_m\) such that every element of \(H\) can be written uniquely
in the form
\[
\varepsilon q_1^{n_1}\cdots q_m^{n_m},
\qquad
\varepsilon\in\{\pm1\},\quad n_i\in\mathbb Z.
\]
For each node \(b_k\), write
\[
b_k=\sigma_kq_1^{\alpha_{k1}}\cdots q_m^{\alpha_{km}},
\qquad
\sigma_k\in\{\pm1\}.
\]

Define the Laurent polynomials
\[
P_+(x_1,\dots,x_m)
=
\sum_{k=1}^n
a_kx_1^{\alpha_{k1}}\cdots x_m^{\alpha_{km}}
\]
and
\[
P_-(x_1,\dots,x_m)
=
\sum_{k=1}^n
a_k\sigma_kx_1^{\alpha_{k1}}\cdots x_m^{\alpha_{km}}.
\]

Suppose first that, after combining like terms and eliminating terms with
zero coefficients, \(P_+\) is not a monomial. By
Lemma~\ref{Laurent}, there exists
\[
A=(A_1,\dots,A_m)\in(\mathbb C\setminus\{0\})^m
\]
such that
\[
P_+(A)=0.
\]

Choose \(s>1\) such that
\[
\log s,\log q_1,\dots,\log q_m
\]
are linearly independent over \(\mathbb Q\), and let
\[
G=
\left\{
\varepsilon s^\ell q_1^{n_1}\cdots q_m^{n_m}:
\varepsilon\in\{\pm1\},\
\ell,n_1,\dots,n_m\in\mathbb Z
\right\}.
\]
For each element of \(G\) all the integers $\varepsilon, \ell, n_1,\ldots, n_m$ are unique.

Define \(f\colon\mathbb C\to\mathbb C\) by
\[
f(z)=0\qquad\text{if }z\notin G,
\]
and
\[
f\left(
\varepsilon s^\ell q_1^{n_1}\cdots q_m^{n_m}
\right)
=
A_1^{n_1}\cdots A_m^{n_m}.
\]

For every \(z\in\mathbb C\) and every node \(b_k\), we have
\[
f(b_kz)
=
A_1^{\alpha_{k1}}\cdots A_m^{\alpha_{km}}f(z).
\]
Indeed, this follows directly from the definition when \(z\in G\); when
\(z\notin G\), both sides vanish because \(b_k\in G\) and hence
\(b_kz\notin G\). Consequently,
\[
D(f,z)
=
P_+(A)f(z)
=
0
\]
for every \(z\in\mathbb C\).

Suppose next that, after combining like terms and eliminating terms with
zero coefficients, \(P_-\) is not a monomial. Choose
\[
A\in(\mathbb C\setminus\{0\})^m
\]
such that
\[
P_-(A)=0.
\]
Using the same group \(G\), define \(f(z)=0\) for \(z\notin G\), and define
\[
f\left(
\varepsilon s^\ell q_1^{n_1}\cdots q_m^{n_m}
\right)
=
\varepsilon A_1^{n_1}\cdots A_m^{n_m}.
\]
Then 
\[
f(b_kz)
=
\sigma_k
A_1^{\alpha_{k1}}\cdots A_m^{\alpha_{km}}f(z)
\]
for every \(z\in\mathbb C\). It follows that
\[
D(f,z)=P_-(A)f(z)=0
\]
for every \(z\in\mathbb C\).

In either case, along the sequence
\[
h_\ell=s^{-\ell},
\]
we have \(h_\ell\to0\) and
\[
|f(h_\ell)|=1.
\]
Thus \(f(h)\neq o(1)\) as \(h\to0\). Moreover, \(G\) is countable, so
\(f\), being zero outside the countable set \(G\), is Lebesgue measurable
on \(\mathbb C\).

It remains to consider the case in which both \(P_+\) and \(P_-\)
reduce to monomials or to the zero polynomial.

Suppose first that \(P_+\) is a monomial. Then there is a node \(b\) such that, if \(-b\) is also a node, the sum
of the corresponding coefficients is nonzero. Moreover, for every node
\(c\notin\{b,-b\}\), the node \(-c\) is also present, and the
coefficients of \(c\) and \(-c\) are negatives of one another.

If \(P_+\) is the zero polynomial, then every node occurs together with
its opposite, and the coefficients of \(c\) and \(-c\) are negatives of
one another.

Similarly, if \(P_-\) is a monomial, then there is a node \(b\) such that, if \(-b\) is also a node, the difference
of the corresponding coefficients is nonzero. Moreover, for every node
\(c\notin\{b,-b\}\), the node \(-c\) is also present, and the
coefficients of \(c\) and \(-c\) are the same.

The two polynomials cannot both be zero, since this would force every
coefficient to be zero. If \(P_-\) is the zero polynomial, then
\[
D(f,h)=a f(bh)+a f(-bh).
\]
 Define
\[
f(z)=
\begin{cases}
1,&z>0,\\
-1,&z<0,\\
0,&\text{otherwise}.
\end{cases}
\]
Then \(D(f,h)=0\) for every \(h\), and \(f(h)\not=o(1)\) as \(h\to0\).

If \(P_+\) is the zero polynomial, then
\[
D(f,h)=a f(bh)-a f(-bh).
\]
 In this case the constant function \(f\equiv1\)
satisfies \(D(f,h)=0\) for every \(h\).

Finally, suppose that both \(P_+\) and \(P_-\) are monomials. Then,
after relabeling the two pairs if necessary,
\[
D(f,h)
=
a_bf(bh)+a_bf(-bh)
+a_cf(ch)-a_cf(-ch).
\]
After multiplying the difference by a nonzero constant and rescaling the variable $h$, this reduces precisely to the  exceptional form
\[
R(f,h)=af(bh)+af(-bh)+f(h)-f(-h),
\]
contrary to the hypothesis.
\end{proof}

The characterization of measurable finite differences obtained by
Ash, Catoiu, and Cs\"ornyei becomes substantially richer under the
additional assumption that \(f(h)=o(1)\) as \(h\to0\). For example, the
finite difference
\[
D(f,h)=f(2h)-3f(h)
\]
has the property that
\[
D(f,h)=o(h)
\quad\text{and}\quad
f(h)=o(1)
\]
imply
\[
f(h)=o(h).
\]
This and related applications to Riemann derivatives are established in \cite{HF}.


\begin{thebibliography}{10}

\bibitem{ACC}
J. M. Ash, S. Catoiu, and M. Cs\"ornyei,
\textit{Generalized vs. ordinary differentiation},
Proc. Amer. Math. Soc. 145 (2017), 1553--1565.

\bibitem{HF}
H. Fejzi\'c,
\textit{The Marcinkiewicz--Zygmund Property for Riemann Differences with Geometric Nodes},
Real Anal. Exchange 50(2) (2025), 445--468.


\end{thebibliography}
\end{document}